\documentclass[10pt,reqno,a4paper]{amsart}
\usepackage{graphicx} 

\usepackage{todonotes}
\usepackage{amsmath, stackrel}
\usepackage{amsfonts}
\usepackage{mathrsfs}  
\usepackage{amssymb}
\usepackage{amsthm}
\usepackage{hyperref}
\usepackage{graphicx}
\usepackage[noadjust]{cite}
\usepackage{caption}
\usepackage{bbm}
\usepackage{tikz}
\usetikzlibrary{decorations.pathreplacing}
\usepackage[T1]{fontenc}
\usepackage[utf8]{inputenc}
\usepackage{mathtools}

\usepackage[top=3.35cm, bottom=3.35cm, left=2.5cm, right=2.5cm, headsep=0.2in]{geometry}

\usepackage{fancyhdr}

\newtheorem{theorem}{Theorem}[section]
\newtheorem{remark}[theorem]{Remark}

\newtheorem{lemma}[theorem]{Lemma}

\newtheorem{definition}[theorem]{Definition}

\newcommand{\beq} {\begin{equation}}
\newcommand{\eeq} {\end{equation}}

\usepackage{verbatim}
\immediate\write18{texcount -tex -sum  \jobname.tex > \jobname.wordcount.tex}

\providecommand{\keywords}[1]
{
  \small	
  \textbf{\textit{Keywords---}} #1
}

\title{Stability of Haagerup Property under Graph Product}
\date{}
\author{Shubhabrata Das and Partha Sarathi Ghosh}
\keywords{Free product of groups, Graph product of groups, Haagerup property, Conditionally negative definite function\\ 2010 Mathematics Subject Classification: 20F65, 43A35, 20E06}

\begin{document}

\maketitle

\begin{abstract}
In this paper, we prove that any graph product of finitely many groups, all satisfying the Haagerup property (or Gromov's a-T-menability) also satisfies Haagerup property.

\end{abstract}

\section{Introduction}
Amenable groups were introduced by von Neumann, in connection with the Banach-Tarski paradox, as precisely the groups not admitting a paradoxical decomposition. Since then the notion of amenability was studied extensively and admits a number of equivalent characterizations. One such characterization is in terms of the existence of an approximate identity, consisting of finitely supported positive definite functions. A survey on this can be found in the book \cite{paterson2000amenability} and in the references therein.\\

The reduced group $C^*$-algebra, associated to a group $G$,  denoted by $C^*_\lambda(G)$, is the completion of the image of the left regular representation $\lambda:\mathbb{C}G\rightarrow \mathbb{B}(\ell^2 G)$, under the operator norm, where $\mathbb{C}G$ is the group ring of finitely supported $\mathbb{C}$-valued functions on $G$. A nuclear $C^*$-algebra means that the identity map (on itself) can be approximated by finite rank maps. $G$ is amenable if and only if $C^*_\lambda(G)$ is nuclear.\\

The most prominent of all non-amenable groups is the non-abelian free group $\mathbb{F}_n$, on $n$-generators. In \cite{haagerup1978example},  U. Haagerup proved that the non-nuclear reduced group $C^*$-algebra, $C^*_\lambda(\mathbb{F}_n)$, admits some approximation property, later termed as the Haagerup approximation property or simply \textit{Haagerup property}, by Akermann-Walter \cite{akemann1981unbounded} and Choda \cite{choda1983group} (see definition \ref{haagerup definition}). The class of groups with Haagerup property is quite large, containing all compact groups (finite groups in the discrete setting), all amenable groups (hence all abelian groups, solvable groups etc.), non-abelian free groups, fundamental groups of closed orientable surfaces etc. An equivalent definition for the groups with Haagerup property is the existence of a proper conditionally negative definite function on such groups. Groups acting properly on a tree \cite{haagerup1978example}, or on a $CAT(0)$-cube complex \cite{niblo1997groups}, admit such functions, and hence are shown to have Haagerup property. For a good survey on this topic see \cite{cherix2001groups}, by Cherix et al. \\

 Yet another way to characterise Haagerup property in a group, often a very useful one, is by showing such groups admit a proper affine isometric action on a Hilbert space. $SO(n,1),~ SU(n,1)$ admit this kind of action on a Hilbert space, and hence have Haagerup property. Bekka, Cherix and Valette showed that amenable groups also admit such actions, \cite{bekka1995proper}. In this light, Property (T) introduced by D. Kazhdan in \cite{kazhdan1967connection}, can be seen as a strong negation of Haagerup property, which is perhaps the reason behind M. Gromov calling Haagerup groups \textit{a-T-menable}, \cite{gromov1992asymptotic}. For example, as $SL_n(\mathbb{Z})$ has property (T) for $n\geq 3$, they do not have Haagerup property. Other such examples are $SL_n(\mathbb{Z})\ltimes \mathbb{Z}^n$, $Sp(n,1)$ for $n\geq 2$ etc. \\

Direct product of two groups with Haagerup property is again a group with Haagerup property. Also, Haagerup property is stable under taking free products and amalgamated products along finite subgroups \cite{cherix2001groups}. \textit{Graph product} of groups is a notion where groups corresponding to vertices in a finite graph are combined, and relators are commutators given by the edges (see definition \ref{graph product definition}). This notion in a way interpolates between the free product and direct product of groups. Since Haagerup property is stable under both, it seems a natural question to ask whether the Haagerup property remains preserved under any graph product, i.e. to ask: if we take groups $\{G_v\}_{v\in V(\Gamma)}$, one for each vertex in a finite graph $\Gamma$ and all with Haagerup property, does the graph product satisfy Haagerup property? We answer this question in the positive.

\begin{theorem}\label{main theorem}
Suppose $\Gamma$ is a finite graph and $\{G_v\}_{v\in V(\Gamma)}$ is a collection of Haagerup groups. Then the graph product $G(\Gamma)$ has Haagerup property.
\end{theorem}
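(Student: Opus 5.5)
We argue by induction on the number of vertices of $\Gamma$, using the standard decomposition of a graph product as an amalgamated free product. If $\Gamma$ is a complete graph, then $G(\Gamma)$ is the direct product $\prod_{v} G_v$, and it has the Haagerup property because the property is stable under finite direct products. Otherwise we choose a vertex $v$ that is not adjacent to every other vertex. Write $\mathrm{lk}(v)$ for its link, $\mathrm{st}(v)=\mathrm{lk}(v)\cup\{v\}$ for its star, and $\Gamma\setminus v$ for the full subgraph on the remaining vertices. The usual normal form for graph products then gives
\[
G(\Gamma)\cong G(\Gamma\setminus v)\ast_{G(\mathrm{lk}(v))} \bigl(G(\mathrm{lk}(v))\times G_v\bigr).
\]
By the induction hypothesis, both factors $G(\Gamma\setminus v)$ and $G(\mathrm{st}(v))=G(\mathrm{lk}(v))\times G_v$ are Haagerup. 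The difficulty is that the amalgamated subgroup $G(\mathrm{lk}(v))$ is in general infinite. The cited stability result (amalgamation over finite subgroups) therefore does not apply, and Haagerup property is not preserved under arbitrary amalgamations. We must use the special structure here: the subgroup $G(\mathrm{lk}(v))$ is a direct factor of one side, and a retract of both sides.

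The plan is to build a proper conditionally negative definite function directly on $G=G(\Gamma)$, equivalently a proper affine isometric action on a Hilbert space. There are two ingredients.

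\textbf{Ingredient 1 (vertex data).} For each vertex $u$ we fix a proper conditionally negative definite function $\psi_u$ on $G_u$. We pull it back along the canonical retraction $r_u\colon G(\Gamma)\to G_u$, which kills every other vertex group. The resulting function $\psi_u\circ r_u$ is conditionally negative definite on $G$, but it is not proper: it only sees the image of $g$ in $G_u$.

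\textbf{Ingredient 2 (combinatorial data).} $G$ acts on a CAT(0) cube complex, namely the Davis complex, or equivalently the Bass--Serre-type complex for graph products. Its vertices are the cosets $gG(\Delta)$, with $\Delta$ ranging over complete subgraphs of $\Gamma$. Because $\Gamma$ is finite, the action is cocompact, and the stabilisers are conjugates of the products $G(\Delta)=\prod_{u\in\Delta}G_u$, which are Haagerup. The combinatorial (wall) distance $d(x_0,gx_0)$ is conditionally negative definite on $G$, by Niblo--Reeves \cite{niblo1997groups}. However, it is proper only relative to the vertex stabilisers.

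Concretely, for $g\in G$ written in reduced normal form $g=g_1\cdots g_n$ with syllables $g_i\in G_{u_i}$, the candidate function is
\[
\Psi(g)=d(x_0,gx_0)+\sum_{i}\psi_{u_i}(g_i).
\]
This $\Psi$ is proper: bounded $\Psi$ forces bounded syllable length, so finitely many vertex patterns, and then bounded $\psi_{u_i}(g_i)$, so finitely many choices of each $g_i$. The work is to show that $\Psi$ is conditionally negative definite.

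To do this we realise $\Psi$ via a wall space, or measured wall structure, in the sense of Cherix--Martin--Valette. The walls are the hyperplanes of the cube complex. Each hyperplane dual to an edge labelled by the vertex $u$ has a "transverse" copy of $G_u$, and we equip it additionally with the measured walls of $G_u$ coming from $\psi_u$ (Robertson--Steger: a conditionally negative definite function on a group is a measured-wall pseudometric, after passing to the associated space). The key point is that a reduced word crosses the $u$-hyperplane, carrying syllable $g_i$, exactly once per syllable. Moreover, reduced forms are unique up to commuting adjacent syllables, which preserves the multiset of pairs $(u_i,g_i)$. This makes $\sum_i\psi_{u_i}(g_i)$ well defined and additive along geodesics in the cube complex.

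We then verify that the combined measured walls are $G$-invariant and that the number (measure) of walls separating $x_0$ from $gx_0$ equals $\Psi(g)$. An alternative route to the same conclusion is the induced-representation trick: build an affine action on $\bigoplus_{\text{hyperplanes }H}\mathcal H_{u(H)}$, permuting summands via the $G$-action on hyperplanes and acting on each summand through the stabiliser of $H$, which retracts onto $G_{u(H)}$.

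I expect the main obstacle to be the following. One must check that the stabiliser of a hyperplane (a conjugate of $G(\mathrm{st}(u))$) acts on the $u$-labelled data through its quotient $G_u$. Equivalently, the cocycle must be well defined and independent of choices of coset representatives. This is where we use that $G(\mathrm{st}(u))=G(\mathrm{lk}(u))\times G_u$ retracts onto $G_u$, and the same fact is what makes the amalgam above tractable despite the infinite edge group.
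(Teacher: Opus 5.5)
Your candidate $\Psi$ is the paper's $\phi_\Gamma=l_r+\tilde\phi$ (in the Davis complex $d(x_0,gx_0)=2|g|_r$), and your properness argument is the paper's. Your ``alternative route'' is meant to be the paper's map $R$ into $\bigoplus_t\mathcal{H}_t$. The induction/amalgam paragraph and Ingredient~1 play no role, since $\psi_u\circ r_u$ never enters $\Psi$.

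The gap is in the step that carries all the content, namely that $g\mapsto\sum_i\psi_{u_i}(g_i)$ is conditionally negative definite. You index the Hilbert space by hyperplanes of the complex you named, and those hyperplanes are not what you claim. In the complex with vertices $gG(\Delta)$, the hyperplane dual to the edge $\{p\}\to pG_u$ consists of the edges $qG(\Delta)\to qG(\Delta\cup\{u\})$ with $q\in pG(\mathrm{lk}(u))$ and $\Delta\subseteq\mathrm{lk}(u)$. Its stabiliser is $pG(\mathrm{lk}(u))p^{-1}$, not a conjugate of $G(\mathrm{st}(u))$. (Your description is right for hyperplanes of the quasi-median Cayley graph, or for the usual Davis complex of a right-angled Coxeter group, but not here.) Two things follow:
\begin{itemize}
\item A syllable $g_i\in G_u$ crosses two $u$-hyperplanes, not one: those dual to $\{p\}\to pG_u$ and to $pG_u\to\{pg_i\}$. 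This is why $d=2l_r$.
\item $G(\mathrm{lk}(u))$ has trivial image under the retraction onto $G_u$. So ``acting on each summand through the stabiliser'' gives only the trivial fibre action, and a summand attached to a single hyperplane cannot carry $\psi_u$.
\end{itemize}
The measured-walls version has the same indexing problem. Moreover, $\psi_u$ is in general not a wall pseudometric at all: for $n\mapsto n^2$ on $\mathbb{Z}$ the triangle inequality fails. What measured walls can realise is $\sqrt{\psi_u}$ (Robertson--Steger), so the separating-wall measure cannot equal $\Psi$.

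The repair is to index the summands by $T=\bigsqcup_u G(\Gamma)/G(\mathrm{st}(u))$, which is exactly the paper's choice. Identify $u$-hyperplanes with cosets $qG(\mathrm{lk}(u))$. Then a coset $pG(\mathrm{st}(u))$ is the whole family $\{pzG(\mathrm{lk}(u))\}_{z\in G_u}$ of $u$-hyperplanes adjacent to the vertex $pG_u$, and its stabiliser really is $pG(\mathrm{st}(u))p^{-1}$. Let $(\pi_u,b_u)$ be an affine action of $G_u$ with $\|b_u\|^2=\psi_u$. Set $\pi=\bigoplus_u\operatorname{Ind}_{G(\mathrm{st}(u))}^{G(\Gamma)}(\pi_u\circ r_u)$, with $r_u$ the projection $G(\mathrm{st}(u))=G(\mathrm{lk}(u))\times G_u\to G_u$. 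Put $b|_{G_u}=b_u$ in the fibre over the trivial coset of $G(\mathrm{st}(u))$; on that fibre $G_u$ acts by $\pi_u$, so the cocycle identity holds on $G_u$.

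The check you flagged is then precisely what is needed. For $a\in G_u$ and $c\in G_w$ with $[u,w]\in E(\Gamma)$, $c$ fixes the coset $G(\mathrm{st}(u))$ and $r_u(c)=e$, so $\pi(c)b(a)=b(a)$; symmetrically $\pi(a)b(c)=b(c)$. Hence the affine maps of $a$ and $c$ commute, and $b$ extends to a cocycle on $G(\Gamma)$.

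For a reduced form, $b(g)=\sum_i\pi(g_1\cdots g_{i-1})b(g_i)$, and the $i$-th term lies over $g_1\cdots g_{i-1}G(\mathrm{st}(u_i))$. These cosets are pairwise distinct. Between two $u$-syllables of a reduced word there is a syllable from a vertex outside $\mathrm{st}(u)$ (the paper's Remark), and such a reduced word is not in $G(\mathrm{st}(u))$. Hence $\|b(g)\|^2=\sum_i\psi_{u_i}(g_i)$.

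With this correction your proof coincides with the paper's. The cocycle formulation is actually cleaner: the $G$-invariance of $\|R(g)-R(h)\|^2$, which the paper only asserts by analogy with the free product case, becomes automatic from $b(g)=b(h)+\pi(h)b(h^{-1}g)$.
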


This result was first proved by Y. Antolin and D. Dreesen in \cite{antolin2013haagerup}, following a very different approach. In some sense our proof is more direct. We show Haagerup property by producing a conditionally negative function on the graph product by gluing such functions on the vertex groups.\\

\section{Haagerup Property}\label{background}
Suppose $G$ is a group. From now on $G$ will always be a finitely generated group. Given a function $\phi:G\rightarrow \mathbb{C}$, it will induce a kernel $k_\phi:G\times G\rightarrow\mathbb{C}$ defined by $k_\phi(g,h):=\phi(h^{-1}g)$. By a positive definite function $\phi: G\rightarrow \mathbb{C}$, we mean that for each $n\in \mathbb{N}$, given any $\{c_1,c_2,\ldots,c_n\}\subset \mathbb{C}$ and $\{g_1,g_2,\ldots,g_n\}\subset G$ we have: $$\sum_{i,j=1}^nc_i\overline{c_j}k_\phi(g_i,g_j)=\sum_{i,j=1}^nc_i\overline{c_j}\phi(g_j^{-1}g_i)\geq 0.$$ 

The set $\ell^2G$ is the space of all square summable functions on $G$, which will form a Hilbert space with respect to the inner product $\langle f_1,f_2\rangle:=\sum_{g\in G} f_1(g)\overline{f_2(g)}$. $(\ell^2G)_1$ is the unit sphere of the Hilbert space. An alternative characterization of $\phi$ being positive definite is (\cite{de1989propriete}): the existence of an orthogonal representation $\pi:G\rightarrow \mathbb{B}(\ell^2G)$ and $\xi \in (\ell^2G)_1$ such that, $$\phi(g)=\langle\pi(g)\xi,\xi\rangle$$

Similarly a $\phi$ will be called conditionally negative definite (CND) if for any $n\in \mathbb{N}$ the $n\times n$ matrix 
$\begin{bmatrix}
K_{ij}=\phi(g_j^{-1}g_i)
\end{bmatrix}$ is a conditionally negative definite matrix for arbitrary choice of subsets $\{g_1,g_2,\ldots,g_n\}\subset G$. An equivalent characterization will be the existence of a Hilbert space $\mathcal{H}$ and a map $R:G\rightarrow \mathcal{H}$ such that, 
\begin{equation}\label{cnd condition}
  k_\phi(g,h)=\phi(h^{-1}g)=||R(g)-R(h)||^2  
\end{equation}

Given any kernel $k$ on $G$, either positive definite or conditionally negative definite, it will induce a function $\phi_k$ of same type on $G$ defined by the relation $\phi_k(g)=k(g,e)$, if $k$ is a $G$-invariant i.e. $k(g,h)=k(h^{-1}g,e)$ for all $(g,h)\in G\times G$. The interaction of positive definite kernels and conditionally negative definite is due to Schoenberg.

\begin{lemma}\label{schoenberg}(\cite{schoenberg1938metric}, Theorem D.11\cite{brown2008textrm})\hspace{1em}
$k:G\times G\rightarrow\mathbb{C}$ is conditionally negative definite if and only if $e^{-k}:G\times G\rightarrow\mathbb{C}$ is positive definite.
\end{lemma}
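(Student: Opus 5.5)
We need to prove Schoenberg's lemma: a kernel $k$ is conditionally negative definite if and only if $e^{-tk}$ is positive definite for all $t>0$. The statement as written (only $e^{-k}$) is the standard theorem with the scaling implicit, since $tk$ is CND whenever $k$ is. We work with real symmetric kernels ($k(g,h)=k(h,g)$, $k(g,g)=0$), which is the setting of (\ref{cnd condition}).

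\textbf{($\Leftarrow$).} Assume $e^{-tk}$ is positive definite for every $t>0$. Take $g_1,\dots,g_n$ and $c_1,\dots,c_n$ with $\sum_i c_i=0$. Then
\[
\sum_{i,j} c_i\overline{c_j}\,\frac{1-e^{-tk(g_i,g_j)}}{t}=-\frac1t\sum_{i,j}c_i\overline{c_j}\,e^{-tk(g_i,g_j)}\le 0,
\]
where the constant term drops out because $\sum_i c_i=0$. Letting $t\to 0^+$ gives $\sum_{i,j} c_i\overline{c_j}\,k(g_i,g_j)\le 0$, so $k$ is CND.

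\textbf{($\Rightarrow$).} Assume $k$ is CND and fix a base point $g_0$.
\begin{enumerate}
\item Define
\[
L(g,h)=k(g,g_0)+k(h,g_0)-k(g,h)-k(g_0,g_0).
\]
Show $L$ is positive definite. Given arbitrary $c_1,\dots,c_n$, adjoin the point $g_0$ with coefficient $c_0=-\sum_i c_i$, so the full coefficient vector sums to zero. Applying the CND inequality to this augmented family and expanding shows that $\sum_{i,j}c_i\overline{c_j}\,L(g_i,g_j)$ equals minus that CND sum, hence is nonnegative.
\item Rearrange to
\[
-k(g,h)=L(g,h)-k(g,g_0)-k(h,g_0)+k(g_0,g_0),
\]
and exponentiate:
\[
e^{-tk(g,h)}=e^{tk(g_0,g_0)}\,e^{-tk(g,g_0)}\,e^{-tk(h,g_0)}\,e^{tL(g,h)}.
\]
\item The factor $e^{tL}=\sum_m t^mL^m/m!$ is positive definite. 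By the Schur product theorem each pointwise power $L^m$ is positive definite, and positive definiteness is preserved under sums and pointwise limits.
\item A kernel of the form $f(g)\,\overline{f(h)}\,P(g,h)$ with $P$ positive definite is again positive definite: substitute $c_i f(g_i)$ for $c_i$. Here $k$ is real, so $f(g)=e^{-tk(g,g_0)}$ is real and the conjugate is harmless. The remaining factor $e^{tk(g_0,g_0)}$ is a positive constant.
\end{enumerate}

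The main obstacle is step 3, the Schur product theorem. To prove it, write positive semidefinite matrices as Gram matrices $A=(\langle u_i,u_j\rangle)$ and $B=(\langle v_i,v_j\rangle)$. Then $A\circ B=(\langle u_i\otimes v_i,\,u_j\otimes v_j\rangle)$ is again a Gram matrix, hence positive semidefinite.

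If $k$ is complex-valued, CND is normally defined with Hermitian $k$. In that case the same argument goes through, provided $f$ is taken with the appropriate conjugation in step 4.
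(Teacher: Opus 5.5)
Your proof is correct. It is the standard argument for Schoenberg's theorem: the base-point kernel $L$, the Schur product theorem, and the factorization of $e^{-tk}$. The paper gives no proof of its own and simply cites Brown--Ozawa, Theorem D.11, which is proved this way.

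One point in your framing needs correcting. The quantifier ``for all $t>0$'' is not merely implicit. The converse genuinely fails with a single exponential. For example, $P=\begin{pmatrix}1&\frac{1}{2}&\frac{1}{100}\\ \frac{1}{2}&1&\frac{1}{2}\\ \frac{1}{100}&\frac{1}{2}&1\end{pmatrix}$ is positive definite, yet $k=-\log P$ (entrywise) gives $\sum_{i,j}c_ic_jk_{ij}=2\log\frac{25}{4}>0$ for $c=(1,-2,1)$. So you have proved the correct form of the lemma. The paper only ever uses the forward direction, which does hold as stated.
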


\begin{definition}\label{haagerup definition}
A group $G$ is called Haagerup group (or a group with Haagerup property) if there is a sequence of functions $\phi_n:G\rightarrow \mathbb{C}$ with the following properties:
\begin{itemize}
    \item[a)] $\phi_n(g)\rightarrow 1$ point-wise
    \item[b)] each $\phi_n$'s vanishes at $\infty$ i.e. $\phi_n(g)\rightarrow 0$ as $|g|\rightarrow \infty$,
    \item[c)] each $\phi_n$'s are positive definite.
\end{itemize}
\end{definition}
Suppose one has a proper function $\phi$ on $G$, which is conditionally negative definite, then form lemma \ref{schoenberg}, the sequence $\phi_n=e^{-\frac{\phi}{n}}$ will satisfy all the conditions of definition \ref{haagerup definition}. The other direction i.e. given a sequence $\phi_n$ with above properties one can construct a proper conditionally negative definite function $\phi$ on $G$, see \cite{cherix2001groups} for the proof of this fact. Therefore we have an equivalent definition:
\begin{definition}\label{hageerup definition 2}
    $G$ has Haagerup property if there is a proper, symmetric, conditionally negative definite function on $G$.
\end{definition}

Whenever $G$ is Haagerup group, we have a proper CND function $\phi$, and a map $R$ from $G$ to a Hilbert space $\mathcal{H}$, satisfying equation (\ref{cnd condition}). We will call this data, denoted by $(G,\phi,\mathcal{H},R)$ a Haagerup tuple corresponding to $G$.
\section{Graph Product}\label{graph product}

Suppose $\Gamma$ is a finite graph, without loops and bigons. $V(\Gamma)$ is vertex set of $\Gamma$ and $E(\Gamma)$ is the set of edges of $\Gamma$. An unordered edge between vertices $v$ and $w$, is denoted by $[v,w]$. Also let $\{G_v\}_{v\in V(\Gamma)}$  be a fixed family of groups. A graph product $G(\Gamma)$ of $\{G_v\}_{v\in V(\Gamma)}$ is obtained by combining $G_v$'s along $\Gamma$.   
\begin{definition}\label{graph product definition}
     For a finite graph $\Gamma$ and a family of groups $\{G_v\}_{v\in V(\Gamma)}$, the graph product $G(\Gamma)$ is the quotient $*_{v\in V(\Gamma)}G_v\Big/\langle\langle \{[G_v,G_w]:[v,w]\in E(\Gamma)\}\rangle\rangle$.

\end{definition}

\begin{figure}[ht!]
\centering
\parbox{5cm}{
\begin{center}
\begin{tikzpicture}[scale=0.33] 
\draw [fill] (1,0) circle [radius=5pt] node[below]{$G_1$};
\draw [fill] (5,0) circle [radius=5pt] node[below]{$G_2$};
\draw [fill] (6,3.5) circle [radius=5pt] node[right]{$G_3$};
\draw [fill] (0,3.5) circle [radius=5pt] node[left]{$G_4$};
\draw [fill] (3,6) circle [radius=5pt] node[above]{$G_5$};
\end{tikzpicture}
\end{center}
\caption{$\Gamma$ is disconnected\\ \centering{$G(\Gamma)=*_{i=1}^5G_i$}}
\label{fig:2figsA}}
\qquad
\begin{minipage}{5cm}
\begin{center}
\begin{tikzpicture}[scale=0.33] 
\draw [fill] (1,0) circle [radius=5pt] node[below]{$G_1$};
\draw [fill] (5,0) circle [radius=5pt] node[below]{$G_2$};
\draw [fill] (6,3.5) circle [radius=5pt] node[right]{$G_3$};
\draw [fill] (0,3.5) circle [radius=5pt] node[left]{$G_4$};
\draw [fill] (3,6) circle [radius=5pt] node[above]{$G_5$};
\draw (1,0) -- (5,0) -- (6,3.5) -- (0,3.5) -- (1,0) -- (3,6) -- (5,0) -- (0,3.5) -- (3,6) -- (6,3.5) -- (1,0);
\end{tikzpicture}
\end{center}
\caption{$\Gamma$ is complete\\\centering{$G(\Gamma)=\oplus_{i=1}^5G_i$}}
\label{fig:2figsB}
\end{minipage}
\end{figure}

The free product and the direct product of groups are the two extreme examples of graph product of groups. One can easily see that if $\Gamma$ is a graph with $n$-vertices and no edges then $G(\Gamma)$ is the free product of the vertex groups, and if $\Gamma$ is a complete graph with $n$-vertices, then $G(\Gamma)$ is the direct product (see the above figures). Graph product of groups were studied by Green in her thesis \cite{green1990graph}.\\

For $\{G_v\}_{v\in V(\Gamma)}$, clearly the graph product $G(A)\leq G(\Gamma)$, where $A$ is a subgraph of $\Gamma$. Define $lk(v):=\{w\in V(\Gamma):[v,w]\in E(\Gamma)\}$ and $st(v):=lk(v)\cup \{v\}$, for $v\in V(\Gamma)$. By $ST(v)$ we denote the maximal subgraph of $\Gamma$, whose vertex set is $st(v)$. Then $G(st(v))$ simply means the graph product $G(ST(v))$. Each element $g\in G(\Gamma)$ can be written in the form $$g=g_1g_2\ldots g_m$$ where each $g_i\in G_{v_i}$, for $v_i\in V(\Gamma)$, for some $m$. If $[v_i,v_{i+1}]\in E(\Gamma)$ then $$g=g_1g_2\ldots g_m=g_1\ldots g_{i-1}g_{i+1}g_{i}g_{i+2}\ldots g_m$$
We call the above operation a `shuffle'. One can perform the following three types of operations on the form $g=g_1g_2\ldots g_m$:
\begin{itemize}
    \item[a)] perform shuffle, 
    \item[b)] drop identity element if it appears,  
    \item[c)] multiply $g_i$ and $g_{i+1}$ if both come from $G_{v_i}$
\end{itemize}
\begin{remark}
Shuffle does not alter the number of elements. Operations (b) and (c) reduce the number of elements by $1$.
\end{remark}

A form $g=g_1g_2\ldots g_m$, is called \textit{reduced} if the number of elements remains the same after performing the above operations any number of times. This reduced form is the usual normal form in case of free product, which is uniquely determined. The presence of $\mathcal{R}_\Gamma$ relations in a general graph product $G(\Gamma)$ prevents the reduced form to be unique. However, the number of elements in any reduced form of a $g\in G(\Gamma)$, is unique \cite[Corollary 3.13]{green1990graph}. Therefore it will induce a length function $l_r$, given by the reduced length $|\cdot|_r$ on $G(\Gamma)$ i.e. 
\begin{equation}\label{definition l}
    \begin{array}{rl}
         l_r:G(\Gamma) & \rightarrow \mathbb{R} \\
        g & \mapsto l_r(g)=|g|_r:=m 
    \end{array}
\end{equation}
where the reduced length $|g|_r$ of a $g\in G(\Gamma)$ in reduced form $g=g_1g_2\ldots g_m$, is $m$.
\begin{remark}
    Whenever $g_1g_2\ldots g_m$ is a reduced form, and $g_i,g_j\in G_v$, then there is some $q\in \mathbb{N}$ and $u\notin st(v)$ such that $i<q<j$, and $g_q\in G_u$.
\end{remark}

\section{Theorem: Constructing CND functions on $G(
\Gamma)$}\label{theorem}
Let $\Gamma$ be a finite graph, and $\{G_v\}_{v\in V(\Gamma)}$ be a $\Gamma$-family of groups with Haagerup property. From definition, groups $G_v$'s with Haagerup property come equipped with proper conditionally negative definite functions $\phi_v:G_v\rightarrow \mathbb{R}$, Hilbert spaces $\mathcal{H}_v$'s with maps $R_v:G_v\rightarrow\mathcal{H}_v$ such that:
$$\phi_v(h^{-1}g)=||R_v(g)-R_v(h)||^2~~~\text{for all}~g,h\in G_v.$$
Our aim is to construct a proper conditionally negative definite function on $G(\Gamma)$, denoted $\phi_{\Gamma}$, combining the $\phi_v$'s on $G_v$'s. To that end, we will use a CAT(0)-cube complex structure that comes along with the graph product $G(\Gamma)$. Existence of a certain `wall structure' on $G(\Gamma)$ gives rise to a finite dimensional $CAT(0)$ cube complex $X$, with a $G(\Gamma)$ action on it. This is due to Reckwerdt \cite{reckwerdt2017weak} and Chatterji-Niblo \cite{chatterji2005wall}. The distance function on $G(\Gamma)$ defined by 
\begin{equation*}
    \begin{array}{rl}
         d_r:& G(\Gamma)\times G(\Gamma)\rightarrow \mathbb{R}\\
         & ~~~~~(g,h)\longmapsto |h^{-1}g|_r
    \end{array}
\end{equation*}
is `equivalent' to $d_X$, the metric on $X$. Notice that $l_r(g)=d_r(g,e)$. More specifically, in \cite{reckwerdt2017weak} Reckwerdt showed that, for any $g,h\in G(\Gamma)$ and for some $x_0\in X$, one has
\begin{equation}\label{comparison metric}
    d_X(gx_0,hx_0)=2d_r(g,h).  
\end{equation}
It follows from the above equality that the metric $d_r$ on $G(\Gamma)$ is conditionally negative definite since $d_X$ is such on the vertex set of $X$ \cite{niblo1997groups}. 

In order to obtain a proper CND function on $G(\Gamma)$, we will combine the metric $d_r$ with the CND functions $\{\phi_v\}$ along the CAT(0) cube complex $X$. First we give an outline in the free product case, and the general case follows similarly. 

\subsection*{Haagerup property for free product}\label{h property for free product}
Let us consider $G$ be the free product of $A$ and $B$, two groups with Haagerup property, and let $(A,\phi_A,\mathcal{H}_A,R_A)$, $(B,\phi_B,\mathcal{H}_B,R_B)$ be two associated Haagerup tuples. Let $T$ be the Bass-Serre tree for $G$. Consider the Hilbert space $\mathcal{H}=\oplus_{t\in T}\mathcal{H}_t$, where $\mathcal{H}_t$ is either $\mathcal{H}_A$ or $\mathcal{H}_B$, depending on whether $t$ belongs to $G/A$ or $G/B$, respectively. Also assume $R_A(e_A)=0_{\mathcal{H}_A}$ and $R_B(e_B)=0_{\mathcal{H}_B}$.\\

Any non-trivial element $g$ in $A*B$ has a unique normal form $g=g_1g_2\cdots g_n$, with $g_i\in A~\text{or}~B$. We use this normal form to define $R:G\rightarrow \mathcal{H}$ as follows:
\begin{equation}\label{definition of R}
         R(g):=
            \begin{cases}
                R_1(g_1)_{G(1)}\oplus R_2(g_2)_{g_1G(2)}\oplus \cdots\oplus R_n(g_n)_{g_1g_2\cdots g_{n-1}G(n)}, ~~\text{if}~g\neq e\\
                0_{\mathcal{H}}~~~~~~~~~~~~~~~~~~~~~~~~~~~~~~~~~~~~~~~~~~~~~~~~~~~~~~~~~~~~~~~~~~~\text{if}~g=e\\
            \end{cases} 
\end{equation}
where $g_i\in G(i)$, which is either $A$ or $B$ and $R_i$ is either $R_A$ or $R_B$ according to the position.\\
\begin{lemma}\label{group inv kernel in free product}
    The kernel $k$ on $G\times G$, defined by the equation, $$k(g,h):=||R(g)-R(h)||^2$$  is $G$-invariant i.e. for any $f,g,h\in G$ we have $k(fg,fh)=k(g,h)$. 
\end{lemma}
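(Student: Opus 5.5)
Write $\mathcal{H}=\oplus_{t}\mathcal{H}_t$, indexed by the cosets $t\in G/A\sqcup G/B$ (the vertices of $T$), and write $R(g)_t$ for the $t$-component of $R(g)$. The plan is to compute $k(g,h)$ coordinatewise and show it depends only on $h^{-1}g$; then $k(fg,fh)=k(g,h)$ is immediate. The key observation is geometric. By construction, the components of $R(g)$ are supported on the vertices $G(1),\,g_1G(2),\,\dots,\,g_1\cdots g_{n-1}G(n)$. These are exactly the coset vertices met by the geodesic in $T$ from the base edge (corresponding to $e$) to the edge corresponding to $g$. At each such vertex $t=xG(i)$ the component is $R_i(x^{-1}\,\cdot)$ applied to the point where the geodesic exits $t$.

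To make this uniform, I will define, for each vertex $t=xC$ ($C\in\{A,B\}$), a "local position" map. It sends $g$ to the element $c_t(g)\in C$ characterised by $g\in x\,c_t(g)\,C'$ modulo the appropriate coset ambiguity. Concretely, $c_t(g)$ is the $C$-letter of $x^{-1}g$ at the relevant position, or $e$ if the geodesic from $e$ to $g$ does not pass through $t$. I then claim
$$R(g)_t=R_C\bigl(c_t(g)\bigr)-R_C\bigl(c_t(e)\bigr),$$
using $R_A(e_A)=0$ and $R_B(e_B)=0$. This gives
$$k(g,h)=\sum_t\|R_C(c_t(g))-R_C(c_t(h))\|^2.$$
The same expression arises from any fixed basepoint, since the basepoint terms cancel in the difference.

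For invariance under left multiplication by $f$, note that $c_{ft}(fg)=c_t(g)$ whenever both are measured relative to the same coset representatives. Changing the representative $x$ to $xc$ replaces $c_t$ by $c^{-1}c_t$. Each summand $\|R_C(a)-R_C(b)\|^2=\phi_C(b^{-1}a)$ is unchanged under $a,b\mapsto c^{-1}a,\,c^{-1}b$. Hence the sum over $t$ reindexed by $t\mapsto ft$ is preserved, and therefore $k(fg,fh)=k(g,h)$.

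A more elementary route, avoiding the tree language, is also available. I would verify directly that $k(g,h)=k(h^{-1}g,e)=\|R(h^{-1}g)\|^2=\sum_i\phi_{G(i)}(w_i)$, where $w=w_1\cdots w_m$ is the normal form of $h^{-1}g$. This proceeds by writing $g=pu$ and $h=pv$, where $p$ is the longest common prefix of the normal forms, allowing the last letter to be split inside a common factor group. All components at vertices along $p$ coincide and cancel. At the branching vertex one gets $\phi_C(v_1^{-1}u_1)$. The remaining components of $g$ and of $h$ sit on disjoint vertices, so they add to $\sum\phi(u_i)+\sum\phi(v_j)$. Since the normal form of $h^{-1}g$ is $v_m^{-1}\cdots v_2^{-1}(v_1^{-1}u_1)u_2\cdots u_n$ (with the middle letter possibly trivial), this matches $\|R(h^{-1}g)\|^2$.

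The main obstacle is the bookkeeping at the branching letter. There are three cases: $u_1$ and $v_1$ lie in the same factor (the middle letter is merged, and possibly $u_1=v_1$ is impossible by maximality); they lie in different factors; or one of $u$, $v$ is empty. In each case one must check that the vertex labels $pG(\cdot)$ agree, so that cancellation happens in the correct summand. This is also the point where $\phi_C(e)=0$ is needed, so that when the middle letter is trivial its term vanishes.
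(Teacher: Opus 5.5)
Your proposal is correct. Your ``more elementary route'' is essentially the paper's proof. The paper shows $k(g,h)=k(h^{-1}g,e)$ by writing the normal form $h_m^{-1}\cdots h_{p+2}^{-1}(h_{p+1}^{-1}g_{p+1})g_{p+2}\cdots g_n$ of $h^{-1}g$, expanding $\|R(h^{-1}g)\|^2$ over orthogonal summands, and using symmetry of $\phi_A,\phi_B$. Your three-case split is more careful than the paper, which tacitly assumes $g_{p+1},h_{p+1}$ exist and lie in a common factor.

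Your primary route is genuinely different. Let $\pi_t(g)$ be the edge at the vertex $t$ nearest to the edge $g$. You then obtain the closed formula
\[
k(g,h)=\sum_{t}\phi_{C_t}\bigl(\pi_t(h)^{-1}\pi_t(g)\bigr).
\]
Its summands do not depend on coset representatives, and it is visibly invariant, since $\pi_{ft}(fg)=f\pi_t(g)$ and $t\mapsto ft$ permutes the vertices of each type. This avoids computing the normal form of $h^{-1}g$ and all case analysis, and it isolates the structural reason the construction works. The price is that it leans on Bass--Serre facts you assert rather than prove: that $G(1),g_1G(2),\dots,g_1\cdots g_{n-1}G(n)$ are the vertices of a geodesic (hence distinct), and that $R(g)_t$ is read off from the exit edge at $t$. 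The paper's computation is self-contained.

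There are two presentation slips, neither affecting the argument. First, ``$g\in x\,c_t(g)\,C'$'' is not the right characterisation. What you want is $x\,c_t(g)=\pi_t(g)$, so $c_t(g)$ is the first letter of $x^{-1}g$ if that letter lies in $C$, and $e$ otherwise. Second, the vector identity $R(g)_t=R_C(c_t(g))-R_C(c_t(e))$ holds only for the representative $x=\pi_t(e)$. For other representatives only $\|R_C(c_t(g))-R_C(c_t(h))\|^2=\phi_C\bigl(c_t(h)^{-1}c_t(g)\bigr)$ is preserved, which is all you use.
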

\begin{proof}
It is sufficient to proof that for any $g,h\in G$ we have $k(g,h)=k(h^{-1}g,e)$. Suppose $g=g_1g_2\cdots g_n$ and $h=h_1h_2\cdots h_m$ are their normal form with $g_i=h_i$ for $1\leq i\leq p\leq \min\{n,m\}$. Then the normal form of $h^{-1}g$ is $h_m^{-1}h_{m-1}^{-1}\cdots h_{p+2}^{-1}cg_{p+2}\cdots g_{n-1}g_n $ where $c=h_{p+1}^{-1}g_{p+1}$. We omit the positions of the vectors in suffix in the definition of $R$ to reduce clutter. Now observe, the components of the vector $$R(h^{-1}g)=R(h_m^{-1}h_{m-1}^{-1}\cdots h_{p+2}^{-1}cg_{p+2}\cdots g_{n-1}g_n)$$ are all in the different orthogonal subspaces $\mathcal{H}_t$'s of $\mathcal{H}$. Therefore we have,
\begin{equation*}
    \begin{array}{rl}
         k(h^{-1}g,e)
         =& \sum_{i=p+2}^n||R_i(g_i)||^2+ \sum_{i=p+2}^m||R_i(h^{-1}_i)||^2 +||R_{p+1}(h_{p+1}^{-1}g_{p+1})||^2\\
         \\
         =& \sum_{i=p+2}^n||R_i(g_i)||^2+ \sum_{i=p+2}^m||R_i(h_i)||^2  
        +||R_{p+1}(g_{p+1})-R_{p+1}(h_{p+1})||^2\\
        &\qquad\qquad \qquad\qquad\qquad\qquad\qquad\qquad\qquad[\text{as}~\phi_A,\phi_B~\text{are symmetric}]\\
        =& ||\oplus_{i=p+1}^n R_i(g_i)-\oplus_{i=p+1}^m R_i(h_i)||^2\\
        \\
        =& ||R(g)-R(h)||^2=k(g,h)
    \end{array}
\end{equation*}
This proves the $G$-invariance of the kernel $k$.
\end{proof}

Now, the kernel $k$ induces a conditionally negative definite function $\tilde{\phi}$ on $G$, which may not be proper. To make it proper, we add the reduced length function $l_r$, which was proven to be CND by Haagerup, in \cite{haagerup1978example}. Define a function $\phi: G\rightarrow\mathbb{R}$ by 
$$g\mapsto \phi(g):=l_r(g)+\tilde{\phi}(g)$$
see lemma \ref{properness}, for the proof of properness of $\phi$. Therefore $(G,\phi,\mathcal{H},R)$ is a Haagerup tuple.

\subsection*{Main Result}\label{main result}
Given a finite graph $\Gamma$, we have a family $\{G_v\}_{v\in V(\Gamma)}$ of groups satisfying Haagerup property and associated Haagerup tuples $\{(G_v,\phi_v,\mathcal{H}_v,R_v)\}_{v\in V(\Gamma)}$. We follow similar approach as above to prove theorem \ref{main theorem} for the graph product $G(\Gamma)$.\\

Let $T$ to be the set of all cosets $\{G(\Gamma)/G(st(v))\}_{v\in V(\Gamma)}$. Consider the Hilbert space $\mathcal{H}:=\oplus_{t\in T} \mathcal{H}_t$, where $\mathcal{H}_t$ is $\mathcal{H}_v$ if $t$ is a coset $gG(st(v))$. Note that the Hilbert space $\mathcal{H}$ in the free product case pertains to the disconnected graph with two vertices. Again without loss of generality, we can assume that each $R_v$ maps identity element to the zero vector in the corresponding Hilbert spaces.\\

A non-trivial $g\in G(\Gamma)$ has a reduced form  $g=g_1g_2\ldots g_m$, where $g_i\in G_{v_i}$ for $i=1,2,\dots,m$. For simplicity we write $g_i\in   G(i)\leq G(st(i))$. Similar to the definition (\ref{definition of R}) in the free product case, we define $R:G(\Gamma)\rightarrow \mathcal{H}$ by:
$$
R(g):=
\begin{cases}
\oplus_{i=1}^m R_i(g_i)_{g_1g_2\ldots g_{i-1}G(st(i))}, ~~\text{if}~g\neq e\\
0_{\mathcal{H}}~~~~~~~~~~~~~~~~~~~~~~~~~~~~~~~~~~\text{if}~g=e\\
\end{cases}
$$
where $R_i$ is $R_{v_i}$. To verify that $R$ is well defined: suppose the groups $G(i)$ and $G(i+1)$ commute in $G(\Gamma)$ and we can perform a shuffle at the $i$-th position of reduced form of $g$, i.e.
$$g=g_1g_2\ldots g_{i-1}g_ig_{i+1}\ldots g_m=g_1g_2\ldots g_{i-1}g_{i+1}g_{i}\ldots g_m.$$ Therefore one can say 
\begin{equation*}
    \begin{array}{rl}
         g_1g_2\ldots g_{i-1} g_i G(st(i+1)) =&  g_1g_2 \ldots g_{i-1} G(st(i+1)) ~~\text{and}  \\
         g_1g_2\ldots g_{i-1}g_{i+1} G(st(i)) = &  g_1g_2\ldots g_{i-1} G(st(i))
    \end{array}
\end{equation*}
showing that a shuffle between $g_i$ and $g_{i+1}$ results in a permutation of the $i$-th and $(i+1)$-th component of the vector $R(g)$ in the direct sum, however leaves $R(g)$ unaffected. A reduced form of $g$ can be obtained from another by only a finitely many shuffles. So the map $R$ is well defined on $G(\Gamma)$. 
\begin{lemma}
    The kernel $k$ on $G(\Gamma)\times G(\Gamma)$ given as follows is $G(\Gamma)$-invariant:  $$(g,h)\mapsto ||R(g)-R(h)||^2$$
\end{lemma}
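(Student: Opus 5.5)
The plan is to reduce $G(\Gamma)$-invariance to invariance under a single letter, and to prove that case by isolating one coordinate of $\mathcal{H}$. Since every $f\in G(\Gamma)$ is a product of letters $f\in G_v$, it suffices to show $k(fg,fh)=k(g,h)$ for $f\in G_v$, $v\in V(\Gamma)$, and arbitrary $g,h$.

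First, define an isometric action $\pi$ of $G(\Gamma)$ on $\mathcal{H}$ by permuting summands. For $f\in G(\Gamma)$, $\pi(f)$ sends the copy of $\mathcal{H}_v$ indexed by the coset $t=xG(st(v))$ identically onto the copy indexed by $ft$. This makes sense because left multiplication preserves the vertex type of a coset, so each $\pi(f)$ is unitary. Note that $\pi(f)$ fixes the coordinate $G(st(v))$ whenever $f\in G_v$.

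Next, fix $v$ and, for each $g$, define its \emph{initial $G_v$-syllable} $a_g\in G_v$. If some reduced form of $g$ can be shuffled so that its first letter lies in $G_v$, let $a_g$ be that letter; otherwise set $a_g=e$. To see that $a_g$ is well defined, I would use the remark that two $G_v$-letters in a reduced form are separated by a letter from some $u\notin st(v)$, together with Green's result that reduced forms differ only by shuffles. Write $g=a_g g'$, where $g'$ cannot be shuffled to begin with a $G_v$-letter. Then $R(g)=R_v(a_g)_{G(st(v))}\oplus \rho(g)$, where $\rho(g)$ collects the components of the remaining letters of $g'$, and these are indexed by the cosets $a_g g_2\cdots g_{i-1}G(st(i))$.

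The key claim is that $\rho(g)$ has zero component at the coordinate $G(st(v))$. Components of letters from a vertex $w\neq v$ are indexed by cosets of $G(st(w))$, which are distinct summands from the $G(st(v))$ one (the summands are indexed by $T$, the disjoint union over $w$). For a later $G_v$-letter $g_i$, the prefix $g_1\cdots g_{i-1}$ contains a letter from some $u\notin st(v)$. One then has to show that this prefix is not in $G(st(v))$, using that the prefix is itself reduced and that reduced length is well defined.

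Now take $f\in G_v$. The reduced form of $fg$ is obtained by replacing $a_g$ with $fa_g$ (dropping it if $fa_g=e$), and the indexing cosets of the remaining letters are all multiplied on the left by $f$. Since $fa_gG(st(v))=G(st(v))$, this gives
\[
R(fg)=R_v(fa_g)_{G(st(v))}\oplus \pi(f)\rho(g),
\]
which is consistent with $R_v(e)=0$. The same holds for $h$. Subtracting, and using the key claim (which makes the two parts orthogonal) together with the fact that $\pi(f)$ is unitary, we get
\[
k(fg,fh)=\|R_v(fa_g)-R_v(fa_h)\|^2+\|\rho(g)-\rho(h)\|^2 .
\]
By the identity $\|R_v(x)-R_v(y)\|^2=\phi_v(y^{-1}x)$, the first term equals $\phi_v(a_h^{-1}a_g)=\|R_v(a_g)-R_v(a_h)\|^2$. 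Hence $k(fg,fh)$ equals the same expression with $f$ removed, which is exactly $k(g,h)$.

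I expect the main obstacle to be the combinatorial part: the well-definedness of $a_g$ and the claim that no later $G_v$-letter is indexed by the coset $G(st(v))$. Both rest on normal-form theory for graph products: Green's uniqueness of reduced length and shuffle-equivalence of reduced forms, together with the separation remark. I would argue them by contradiction. If $g_1\cdots g_{i-1}\in G(st(v))$, that element could be rewritten using only letters from $st(v)$, and the resulting form of $g$ would either be shorter, or would allow $g_i$ to be merged with another $G_v$-letter, or would show that $a_g$ was not the unique initial $G_v$-syllable. Each of these contradicts reducedness.
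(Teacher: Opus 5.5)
Your proof is correct, and it takes a different route from the paper. The paper's proof of this lemma is one sentence saying it is ``exactly the same'' as the free-product lemma. That lemma chooses reduced forms of $g$ and $h$ sharing a prefix $g_1\cdots g_p=h_1\cdots h_p$, and writes the reduced form of $h^{-1}g$ as $h_m^{-1}\cdots h_{p+2}^{-1}\,c\,g_{p+2}\cdots g_n$ with $c=h_{p+1}^{-1}g_{p+1}$. It then compares $\|R(h^{-1}g)\|^2$ with $\|R(g)-R(h)\|^2$ term by term.

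You instead reduce to left multiplication by a single syllable $f\in G_v$ and split off the initial $G_v$-syllable $a_g$. You then prove the equivariance identity $R(fg)=R_v(fa_g)_{G(st(v))}\oplus\pi(f)\rho(g)$ for the permutation action on summands. Invariance then follows from $\phi_v((fa_h)^{-1}fa_g)=\phi_v(a_h^{-1}a_g)$ together with orthogonality.

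What this buys is that you never need the reduced form of $h^{-1}g$ for an arbitrary pair, and that is exactly where graph products differ from free products. First, several syllables can merge at once. For $\Gamma$ a single edge $[u,w]$, $g=a_1c_1$, $h=a_2c_2$ with $a_1\neq a_2$ and $c_1\neq c_2$, one gets $h^{-1}g=(a_2^{-1}a_1)(c_2^{-1}c_1)$, which is not the free-product shape. Second, a maximal common prefix has to be made sense of across non-unique reduced forms. So the paper's one-line reduction hides genuine work that your argument carries out explicitly.

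The cost of your route is the combinatorial input about $a_g$ and about $\rho(g)$ vanishing at the coordinate $G(st(v))$. All of it follows from Green's normal form theorem. The paper's route is shorter on the page and parallels the free-product computation. Your disjoint-union convention for $T$ is also the right one: when $st(v)=st(w)$ for $v\neq w$, the paper's assignment of $\mathcal{H}_t$ to a coset $t$ is ambiguous.

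Two small tightenings.
\begin{itemize}
\item For the key claim, the cleanest argument is that shuffles preserve the multiset of vertex labels. Hence a reduced word containing a letter from some $u\notin st(v)$ never represents an element of $G(st(v))$. Your third branch is not enough on its own: when $a_g\neq e$ it only yields $g_i=a_g$ in $G_v$, which is not yet a contradiction.
\item When $a_g=e$, say explicitly that the first $G_v$-letter of $g$ (if any) is preceded by a letter from some $u\notin st(v)$, since otherwise it could be shuffled to the front. This gives both the key claim for that letter and the reducedness of $fg_1\cdots g_m$.
\end{itemize}
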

\begin{proof}
    Note that the reduced forms are not unique as in the free product case. Otherwise the proof is exactly the same as lemma \ref{group inv kernel in free product}.   
\end{proof}

Thus we obtain a conditionally negative definite function  $\tilde{\phi}$ on $G(\Gamma)$, which is not necessarily proper. Notice that if $g$ has a reduced form $g=g_1g_2\dots g_m$, where $g_i\in G_{v_i}$, then 
\begin{equation}\label{bozejko type splitting}
    \tilde{\phi}(g)=\sum_{i=1}^m \phi_{v_i}(g_i)
\end{equation}

From equation (\ref{comparison metric}), we observe that the distance function $d_r$ is conditionally negative definite on $G(\Gamma)\times G(\Gamma)$, hence $l_r$ is also CND on $G(\Gamma)$.\\

Define $\phi_\Gamma$ on $G(\Gamma)$ as:
$$\phi_\Gamma:=l_r+\tilde{\phi}$$ being a sum of two CND functions, it is also conditionally negative definite on $G(\Gamma)$.

\begin{remark}
    For any $v\in V(\Gamma)$ one has, $\phi_\Gamma\Big|_{G_v}\equiv 
     1+ \phi_v$.    
\end{remark}

\begin{lemma}\label{properness}
    $\phi_\Gamma$ is proper on $G(\Gamma)$.
\end{lemma}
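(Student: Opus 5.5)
Since every group in play is finitely generated, properness of $\phi_\Gamma$ means that for every $C>0$ the sublevel set $\{g\in G(\Gamma):\phi_\Gamma(g)\le C\}$ is finite. The approach is to split $\phi_\Gamma$ into its two nonnegative summands and bound each one separately. Both $l_r$ and $\tilde\phi$ are nonnegative: $l_r$ is a length, and $\tilde\phi(g)=\|R(g)\|^2$ because $R(e)=0_{\mathcal H}$. So $\phi_\Gamma(g)\le C$ forces both $l_r(g)\le C$ and $\tilde\phi(g)\le C$.

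First, I use the length bound. Any such $g$ has a reduced form $g=g_1g_2\cdots g_m$ with $m\le C$ and $g_i\in G_{v_i}$. Since $V(\Gamma)$ is finite, the sequence of vertices $(v_1,\dots,v_m)$ ranges over a finite set of at most $\sum_{m\le C}|V(\Gamma)|^m$ possibilities.

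Second, I use the splitting \eqref{bozejko type splitting}, namely $\tilde\phi(g)=\sum_{i=1}^m\phi_{v_i}(g_i)$. Each $\phi_v$ is conditionally negative definite with $\phi_v(e)=\|R_v(e)-R_v(e)\|^2=0$, and it equals $\|R_v(\cdot)-R_v(e)\|^2\ge 0$. Hence every summand is nonnegative, and $\tilde\phi(g)\le C$ gives $\phi_{v_i}(g_i)\le C$ for each $i$. Each $\phi_{v_i}$ is proper on $G_{v_i}$, so the set $F_v(C)=\{x\in G_v:\phi_v(x)\le C\}$ is finite. Therefore $g$ lies in the image of the multiplication map from the finite set $\bigcup_{m\le C}\bigcup_{(v_1,\dots,v_m)}F_{v_1}(C)\times\cdots\times F_{v_m}(C)$ into $G(\Gamma)$. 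That image is finite, and this proves properness.

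The only delicate point is that the splitting \eqref{bozejko type splitting} must hold for whichever reduced form is chosen, and that the reduced form must realize $l_r(g)$. This follows because $R$ is well defined independently of the reduced form (the shuffle argument above), and because all reduced forms have the same length (Green's result). I expect no genuine obstacle beyond invoking these two facts. Note also that $l_r$ is essential: $\tilde\phi$ alone is not proper, since arbitrarily long words in letters with small $\phi_v$ would have bounded $\tilde\phi$, and it is precisely the length bound that excludes them.
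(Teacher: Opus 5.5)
Your proof is correct and uses the same idea as the paper: bounded reduced length together with the splitting \eqref{bozejko type splitting} and properness of each $\phi_v$, with the $l_r$ summand handling unbounded length. Your sublevel-set formulation, which makes the nonnegativity of both summands explicit, is a cleaner rendering of the paper's two-case sequence argument; one small correction to your closing aside is that $\tilde\phi$ is only \emph{not necessarily} proper, since if every $\phi_v$ is strictly positive off the identity then properness gives $\phi_v\ge\delta>0$ on $G_v\setminus\{e\}$, so $\tilde\phi\ge\delta\, l_r$ is already proper, and failure requires letters with $\phi_v=0$ (e.g.\ finite vertex groups with $\phi_v\equiv 0$).
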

\begin{proof}
    Let $|g|$ denotes the word length of $g \in G(\Gamma)$. We need to prove that $\phi_\Gamma(g)\rightarrow \infty$ as $|g|$ grows to infinity. We have two cases to prove.\\
    
    \textbf{Case 1:} $|g|$ goes to infinity, but $l_r(g)$ is finite, say $l_r(g)=k$. Let $g=g_1g_2\cdots g_k$ is a reduced form. Then by pigeon-hole principle there is an $1\leq i\leq k$, such that $|g_i|$ grows to infinity. Then the value of $\phi_{v_i}(g_i)$ also goes to infinity, as $\phi_{v_i}$ is proper. Hence by equation (\ref{bozejko type splitting}), $\phi_\Gamma(g)$ goes to infinity.\\
    
    \textbf{Case 2:}  $|g|$ goes to infinity and $l_r(g)$ also goes to infinity. So again, $\phi_\Gamma(g)=l_r(g)+\tilde{\phi}(g)$ goes to infinity. 
\end{proof}
Hence we have the following result (Theorem \ref{main theorem}):
\begin{theorem}
    Suppose $\Gamma$ is a finite graph and $\{G_v\}_{v\in V(\Gamma)}$ is a collection of groups with the Haagerup property. Then the graph product $G(\Gamma)$ has Haagerup property.
\end{theorem}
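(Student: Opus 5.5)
The theorem follows by assembling the pieces constructed above and invoking Definition \ref{hageerup definition 2}. We need a proper, symmetric, conditionally negative definite function on $G(\Gamma)$, and the candidate is $\phi_\Gamma=l_r+\tilde{\phi}$.

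First, we record that $\tilde{\phi}$ is conditionally negative definite. By the preceding lemma the kernel $k(g,h)=\|R(g)-R(h)\|^2$ is $G(\Gamma)$-invariant, so $\tilde{\phi}(g):=k(g,e)=\|R(g)\|^2$ satisfies $\tilde{\phi}(h^{-1}g)=\|R(g)-R(h)\|^2$. This is exactly the characterisation (\ref{cnd condition}), so $\tilde\phi$ is CND.

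Second, $l_r$ is CND. This follows from the comparison (\ref{comparison metric}) with the CAT(0) cube complex metric $d_X$, which is CND on vertices by Niblo--Reeves. Indeed, $l_r(h^{-1}g)=d_r(g,h)=\tfrac12 d_X(gx_0,hx_0)$, and the orbit of $x_0$ may be taken inside the vertex set.

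Third, symmetry. Both $l_r$ and $\tilde\phi$ are symmetric. If $g=g_1\cdots g_m$ is reduced, then $g^{-1}=g_m^{-1}\cdots g_1^{-1}$ is reduced, so $l_r(g^{-1})=l_r(g)$. By (\ref{bozejko type splitting}) we get $\tilde\phi(g^{-1})=\sum\phi_{v_i}(g_i^{-1})=\tilde\phi(g)$, since each $\phi_v$ is symmetric. Sums of CND functions are CND, so $\phi_\Gamma$ is a symmetric CND function. Properness is Lemma \ref{properness}.

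The main point needing care is properness, specifically Case 1 of Lemma \ref{properness}. I would make it rigorous by fixing finite generating sets $S_v$ of the $G_v$ and using $S=\bigcup S_v$ for $G(\Gamma)$. Then for a reduced form $g=g_1\cdots g_k$ we have $|g|\le\sum_i|g_i|_{S_{v_i}}$. Given $C>0$, the set $\{\phi_\Gamma\le C\}$ forces $l_r(g)\le C$. Since $\phi_v\ge0$, it also forces each $\phi_{v_i}(g_i)\le C$, and by properness of $\phi_v$ each $g_i$ then lies in a finite ball of $G_v$. Hence the word length of $g$ is bounded, so $\{\phi_\Gamma\le C\}$ is finite, which is properness.

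Finally, Definition \ref{hageerup definition 2} gives the Haagerup property. Equivalently, by Lemma \ref{schoenberg}, the functions $e^{-\phi_\Gamma/n}$ satisfy Definition \ref{haagerup definition}.
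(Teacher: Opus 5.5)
Your proposal is correct and follows essentially the same route as the paper: you take $\phi_\Gamma = l_r + \tilde\phi$, get $\tilde\phi$ CND from the $G(\Gamma)$-invariant kernel $\|R(g)-R(h)\|^2$ and $l_r$ CND from the comparison with the cube complex metric, and get properness from the splitting $\tilde\phi(g)=\sum_i\phi_{v_i}(g_i)$. Your sublevel-set argument (nonnegativity of both summands bounds $l_r(g)$ and each $\phi_{v_i}(g_i)$ by $C$) is a cleaner, fully rigorous form of the paper's two-case properness argument, and your explicit check of symmetry fills a point the paper leaves implicit.
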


\section{Acknowledgement}
PSG\footnote{parthasarathi.ghosh.100@gmail.com} was supported by CSIR Fellowship [File No. 08/155(0066)/2019-EMR-I], Govt. of India. SD\footnote{shubhabrata.maths@presiuniv.ac.in} acknowledges the infrastructural support provided by the Dept. of Mathematics, Presidency University, through the DST-FIST [File No. SR/FST/MS-I/2019/41].

\bibliographystyle{alpha}
\bibliography{references}

\end{document}